\documentclass{amsart}

\usepackage[english]{babel}

\usepackage{amsmath}
\usepackage{amssymb}
\usepackage{mathrsfs}
\usepackage{graphicx}
\usepackage[colorlinks=true, allcolors=blue]{hyperref}
\usepackage{enumitem}
\usepackage{xfrac}
\usepackage{todonotes}

\newtheorem{lemma}{Lemma}
\newtheorem{theorem}{Theorem}

\title[Infinitely many quasi--arithmetic maximal reflection groups]{Infinitely many quasi--arithmetic \\ maximal reflection groups}

\author{Edoardo Dotti}
\address{Dipartimento Formazione e Apprendimento, Piazza San Francesco 19, 6600 Locarno, Svizzera / Switzerland}
\email{edoardo.dotti@supsi.ch}
\thanks{E.D. was partially supported by Swiss National Science Foundation (project PP00P2-170560)}

\author{Alexander Kolpakov}
\address{Institut de Math\'ematiques, Rue Emile--Argand 11, 2000 Neuch\^atel, Suisse / Switzerland}
\email{kolpakov.alexander@gmail.com}
\thanks{A.K. was partially supported by Swiss National Science Foundation (projects PP00P2-170560 and PP00P2-202667)}

\begin{document}

\begin{abstract}
In contrast to the fact that there are only finitely many maximal arithmetic reflection groups acting on the hyperbolic space $\mathbb{H}^n$,~$n\geq 2$, we show that:

\begin{itemize}[leftmargin = 1.5pc, rightmargin = 1.5pc]
    \item[(a)] one can produce infinitely many maximal quasi--arithmetic reflection groups acting on $\mathbb{H}^2$;
    \item[(b)] they admit infinitely many different fields of definition;
    \item[(c)] the degrees of their fields of definition are unbounded.
\end{itemize}
However, for $n\geq 14$ an approach initially developed by Vinberg shows that there are still finitely many fields of definitions in the quasi--arithmetic case. 
\end{abstract}

\maketitle

\section{Introduction}\label{sec:intro}

One of the main results in the theory of hyperbolic reflection groups is that there are only finitely many maximal arithmetic reflection groups. This was independently proved by Agol, Belolipetsky, Storm, and Whyte \cite{absw}, and by Nikulin \cite{nikulin}. Both proofs use the fact that there are no arithmetic reflection groups acting on $\mathbb{H}^n$ with $n\geq 30$, as was shown by Vinberg \cite{vinberg-1984}. The techniques of \cite{absw} essentially use spectral bounds for the Laplacian operator, while the approach of \cite{nikulin} follows in spirit the previous work of Nikulin and Vinberg. 

In particular, Nikulin shows that whenever $\Gamma < \mathrm{Isom}\,\mathbb{H}^n$, $n \geq 2$, is an arithmetic reflection group defined over $k$, the degree of $k$ is bounded \cite{nikulin}. Together with the works \cite{bel-fields, bel-lin-fields, linowitz, mac}, this result implies that $\mathrm{deg}_{\mathbb{Q}}\,k \leq 25$.
Thus, it is natural to ask whether these results can be extended to quasi--arithmetic reflection groups, since these enjoy some of the fundamental properties of arithmetic groups.
However, the following theorem shows that in the quasi--arithmetic setting we have a very different picture (Section~\ref{sec:proof}). 

\begin{theorem}\label{theorem:main}
For every integer $d \geq 1$ there exists a totally real number field $k$ of degree $d$ over $\mathbb{Q}$ and a maximal quasi--arithmetic lattice $\Gamma < \mathrm{Isom}\, \mathbb{H}^2$ generated by reflections, with its field of definition exactly~$k$. 
\end{theorem}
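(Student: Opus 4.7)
The plan is to construct, for each $d\geq 1$, an explicit hyperbolic Coxeter polygon $P_d\subset\mathbb{H}^2$ whose reflection group $\Gamma_d=W(P_d)$ realises a prescribed totally real field $k$ of degree $d$ as its field of definition and is maximal as a quasi--arithmetic reflection lattice. The construction starts from the arithmetic side: fix $k$ of degree $d$ together with a totally real generator $\lambda\in\mathcal{O}_k$, and produce a Coxeter polygon $P_d$ in which one pair of ultraparallel sides sits at hyperbolic distance $\ell$ with $2\cosh\ell=\lambda$, the remaining angles being of the form $\pi/m$. In low degree it is simpler to use a hyperbolic triangle with angles $(\pi/2,\pi/3,\pi/m)$, whose field of definition $\mathbb{Q}(\cos 2\pi/m)$ has degree $\varphi(m)/2$ and already accounts for many values of $d$.

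Quasi--arithmeticity then follows from Vinberg's criterion: one checks that every cyclic product of entries of $2G(P_d)$ lies in $\mathcal{O}_k$. Adjacent entries are $-2\cos(\pi/m_{ij})$, algebraic integers in cyclotomic rings, and the non--adjacent entries $-2\cosh\ell$ lie in $\mathcal{O}_k$ by construction; hence so do all of their cyclic products. The field generated by these cyclic products contains $\lambda$, and therefore all of $k$, and by the design of $P_d$ nothing more, so the field of definition is exactly $k$. Because quasi--arithmeticity imposes no positivity condition on Galois conjugates of the ambient quadratic form (unlike arithmeticity), no further verification is needed at this stage.

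The main obstacle is the maximality of $\Gamma_d$: one must rule out any non--trivial Coxeter subdivision of $P_d$, since such a subdivision would yield a strictly larger reflection group $W(Q)\supsetneq W(P_d)$. The approach is Galois--theoretic: a putative $Q\subset P_d$ would make $\Gamma_d$ a reflection subgroup of $W(Q)$, forcing the field of definition of $W(Q)$ to be a subfield of $k$ and typically a proper one; choosing $\lambda$ so that $k=\mathbb{Q}(\lambda)$ admits no proper subfield compatible with the cyclic--product relations of a smaller polygon then excludes subdivision. For triangle candidates one appeals instead to the classical inclusion theory of Schwarz triangles (Singerman's list), and for polygons with more sides one verifies by direct inspection that no Coxeter reflection symmetry of $P_d$ is compatible with a finer tessellation. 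The remaining arithmetic bookkeeping is routine once maximality is established.
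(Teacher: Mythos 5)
Your verification of quasi--arithmeticity inverts the definition, and this breaks the construction at its core. In Vinberg's criterion, quasi--arithmeticity means that conditions (a) and (b) hold --- $K$ totally real and $G^\sigma$ positive semi--definite for every embedding $\sigma$ nontrivial on $k$ --- while it is the integrality condition (c) that is allowed to fail. You verify (c) (cyclic products of $2G$ lying in $\mathscr{O}_k$) and explicitly dismiss the positivity condition as ``not imposed''; in fact (b) is the one condition you must check, and it is the hard one. If you set $2\cosh\ell=\lambda$ for an arbitrary totally real generator $\lambda\in\mathscr{O}_k$, the conjugates $\sigma(\lambda)$ will generically be large and $G^{\sigma}$ indefinite, so your group will not be quasi--arithmetic at all. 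The paper's central device is precisely to control these conjugates: it takes $\theta=\cosh\ell=\frac{71}{82}\tau$ with $\tau$ a Pisot generator of $k$ (Za\"{\i}mi's theorem guarantees such a generator in every totally real field), so that $\sigma(\theta)^2<\sfrac{3}{4}$ for every nontrivial $\sigma$ and condition (b) holds for the Lambert quadrilateral. Relatedly, your triangle--group shortcut cannot work: for a triangle group condition (c) is automatic (since $2\cos(\sfrac{\pi}{m})$ is an algebraic integer), so a quasi--arithmetic triangle group is already arithmetic; there are finitely many maximal arithmetic reflection groups and their fields have bounded degree, so triangle groups cannot realize unbounded $d$.

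The maximality argument also has a gap. If $\Gamma_d<W(Q)$ with finite index, the two groups are commensurable and therefore have the \emph{same} field of definition --- it is a commensurability invariant --- so no ``proper subfield'' contradiction is available; and Singerman's list governs inclusions of triangle groups in triangle groups, not of quadrilateral groups in triangle groups, which is the dangerous case since triangle groups have arbitrarily small coarea. The paper resolves both issues at once: any reflection overgroup in $\mathbb{H}^2$ is a triangle or a quadrilateral group; a triangle overgroup is excluded because it would force $\Gamma_\theta$ to be arithmetic, whereas $\Gamma_\theta$ is arranged to be \emph{properly} quasi--arithmetic (itself requiring an argument, which the paper extracts from the bound $\deg_{\mathbb{Q}}k\leq 25$ for arithmetic reflection groups once $d$ is large); and a Coxeter quadrilateral has coarea at least $\sfrac{\pi}{6}$, which equals the coarea of $\Gamma_\theta$, forcing index one. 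Finally, maximality as a lattice (not merely among reflection groups) needs Vinberg's normalizer result $\Gamma=\Gamma_\theta\rtimes\mathfrak{S}$ with $\mathfrak{S}$ the symmetry group of the quadrilateral; this step is absent from your sketch.
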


Moreover, for any fixed number field $k$ of given degree $d \geq 1$ there are infinitely many quasi--arithmetic maximal reflection groups $\Gamma$ defined over a totally real number field $k$ acting on $\mathbb{H}^2$ (Section~\ref{sec:comments}).

Interestingly enough, the situation in higher dimensions is closer to the arithmetic setting. Let $K_1$ and $K_2$ denote the following sets of number fields:
\begin{equation*}
    K_1 = \{ \mathbb{Q}(\sqrt{2}), \mathbb{Q}(\sqrt{5}), \mathbb{Q}(\cos \sfrac{2\pi}{7})\},
\end{equation*}
and 
\begin{equation*}
    K_2 = \{ \mathbb{Q}(\sqrt{3}), \mathbb{Q}(\sqrt{6}), \mathbb{Q}(\sqrt{2},\sqrt{3}), \mathbb{Q}(\sqrt{2},\sqrt{5})\} \cup \{ \mathbb{Q}(\cos \sfrac{2\pi}{m}) \}_{m = 9, 11, 15, 16, 20}.
\end{equation*}

The following theorem was proved by Vinberg in \cite{vinberg-1984} for arithmetic reflection groups.   
\begin{theorem}\label{theorem:bounds}
If $\Gamma < \mathrm{Isom}\, \mathbb{H}^n$ is a cocompact quasi--arithmetic reflection group defined over a totally real number field $k$, then $k \in K_1$ for $n \geq 22$, and $k \in K_1 \cup K_2$ for $n \geq 14$. 
\end{theorem}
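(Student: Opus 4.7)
The plan is to observe that Vinberg's original proof in \cite{vinberg-1984} of this bound for arithmetic reflection groups uses only the positive semi-definiteness of the Galois conjugates of the Gram matrix, together with the combinatorial constraints imposed by the cocompactness of the Coxeter polytope. It does \emph{not} use the integrality of the cyclic products $g_{i_1 i_2} g_{i_2 i_3} \cdots g_{i_l i_1}$, which is the condition that distinguishes arithmetic from quasi--arithmetic reflection groups. Since every quasi--arithmetic group satisfies the positivity condition by definition, the same proof applies and yields the same bounds.

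Concretely, let $P$ be a Coxeter polytope for $\Gamma$ with Gram matrix $G = (g_{ij})$, where $g_{ij} = -\cos(\pi/m_{ij})$ for adjacent facets $F_i, F_j$ meeting at dihedral angle $\pi/m_{ij}$, and $g_{ij} \leq -1$ otherwise. Quasi--arithmeticity gives $g_{ij} \in k$ and that $\sigma(G)$ is positive semi-definite for every non-trivial embedding $\sigma \colon k \hookrightarrow \mathbb{R}$. In particular, every label $m_{ij}$ forces $\mathbb{Q}(\cos(\pi/m_{ij})) \subseteq k$. Since $P$ is compact, every vertex figure of $P$ is an elliptic Coxeter diagram of rank $n$; combining this with the classification of finite Coxeter groups, Vinberg's combinatorial argument shows that for $n \geq 14$ the diagram of $P$ must contain prescribed subdiagrams producing non-trivial labels $m_{ij}$.

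The final step is the Galois-conjugate positivity analysis: for every non-trivial embedding $\sigma$ and every subdiagram of $P$, the corresponding submatrix of $\sigma(G)$ is positive semi-definite. Since $\sigma(\cos(\pi/m))$ has the form $\cos(l\pi/m)$ for some $l$ coprime to $2m$, and the resulting ``conjugate subdiagram'' must itself be elliptic or parabolic, most values of $m$ are eliminated, leaving only the cyclotomic-like fields in $K_1 \cup K_2$ for $n \geq 14$, and only those in $K_1$ for $n \geq 22$. The main obstacle, and the bulk of the work, is a careful line-by-line verification that every step of Vinberg's original argument uses only the positivity condition and not the integrality of cyclic products; I expect this to be essentially mechanical, since the integrality condition enters only in passing between the reflection group and an integral orthogonal group and plays no role in the combinatorial-geometric dimensional analysis underlying Vinberg's bounds.
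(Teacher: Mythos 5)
Your proposal takes essentially the same route as the paper: both reduce the theorem to the observation that Vinberg's original argument in \cite{vinberg-1984} uses only conditions (a) and (b) of his criterion (total reality and positive semi--definiteness of the Galois conjugates of the Gram matrix), never the integrality of cyclic products, so quasi--arithmeticity suffices. The paper makes the verification slightly more pointed by singling out the one non--combinatorial ingredient --- Vinberg's Proposition~17, that the determinant of any Lann\'er subscheme generates the field of definition --- and checking that its proof needs only (a) and (b), which is the concrete form of the ``line-by-line verification'' you anticipate.
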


By following the proof given in \cite{vinberg-1984} one can make sure that \textit{quasi--arithmeticity} is already sufficient for Theorem~\ref{theorem:bounds} to hold (Section~\ref{sec:dimensions}). Together, Theorem~\ref{theorem:main} and Theorem~\ref{theorem:bounds} provide a partial answer to Question~7.5 in \cite{bk}.

\section*{Acknowledgements}

The authors are grateful to Nikolay V. Bogachev (IITP RAS \& CombGeoLab MIPT, Russia) for useful remarks and stimulating discussions. The authors also thank the anonymous referee for her or his comments that helped substantially improving the paper. 

\section{Preliminaries}\label{sec:prel}

Let $\mathbb{H}^n$, $n\geq 2$, be the hyperbolic $n$--dimensional space. We shall use the \textit{hyperboloid model} of $\mathbb{H}^n$ as described in \cite[\S~3.2]{ratcliffe}. Namely, let the two--sheeted hyperboloid $\mathscr{H}$ be given by

\begin{equation*}
    \mathscr{H} = \{ x \in \mathbb{R}^{n+1} \,|\, \langle x, x \rangle = -1 \},
\end{equation*}
where
\begin{equation*}
    \langle x,y \rangle = - x_0 y_0 + x_1 y_1 + \ldots  + x_n y_n
\end{equation*}
is the \textit{Lorentzian product} of $x = (x_0, \ldots, x_n) \in \mathscr{H}$ and $y = (y_0, \ldots, y_n) \in \mathscr{H}$. Let $\mathbb{R}^{n,1}$ denote the vector space $\mathbb{R}^{n+1}$ equipped with the above Lorentzian product.

Let $\mathscr{H}^+ = \mathscr{H} \cap \{ x \in \mathbb{R}^{n+1} \,|\, x_0 > 0 \}$ be the upper sheet of $\mathscr{H}$. Then $\mathscr{H}^+$ with the distance $d(x,y)$ defined by
\begin{equation*}
    \cosh d(x,y) = - \langle x, y \rangle
\end{equation*}
is a Riemannian space of constant sectional curvature $-1$. Let $\mathbb{H}^n$ denote this latter space. A detailed account of different properties of $\mathbb{H}^n$ is given in \cite{ratcliffe}.

Let $\mathrm{O}_{n,1}$ be the orthogonal group of the above Lorentzian product, and $\mathrm{O}^+_{n,1}$ be its index $2$ subgroup preserving $\mathscr{H}^+$. The isomorphisms  $\mathrm{Isom}\,\mathbb{H}^n \cong \mathrm{O}^+_{n,1} \cong \mathrm{PO}_{n,1}$ are well--known.

Let $\Gamma < \mathrm{Isom}\, \mathbb{H}^n$ be a discrete subgroup. Then $\Gamma$ is called a \textit{lattice} (or a \textit{finite covolume} subgroup) if a fundamental set for the action of $\Gamma$ on $\mathbb{H}^n$ has finite volume. We shall also call $\Gamma$ a \textit{hyperbolic lattice} for short. If $\Gamma$ acts on $\mathbb{H}^n$ with a compact fundamental set, then $\Gamma$ is called a \textit{uniform lattice} (or a \textit{cocompact} subgroup). 

Let $P \subset \mathbb{H}^n$ be a convex polytope. If $\Gamma < \mathrm{Isom}\, \mathbb{H}^n$ is generated by reflections in the facets (i.e. codimension $1$ faces) of $P$, then $\Gamma$ is called a \textit{hyperbolic reflection group}. In this case $\Gamma$ is a lattice whenever $P$ has finite volume, and $\Gamma$ is uniform whenever $P$ is compact.

If the dihedral angles of $P$ are integer submultiples of $\pi$ (i.e. each angle has the form $\sfrac{\pi}{k}$, for some integer $k\geq 2$), then $P$ is called a \textit{Coxeter polytope}. Let $\{e_i\}^m_{i=1}$ be the set of unit outer normals in $\mathbb{R}^{n,1}$ to the facets of $P$. The \textit{Gram matrix} of $P$ is $G(P) = \{ \langle e_i, e_j \rangle \}^m_{i,j=1}$. It is worth mentioning that there are no lattices generated by reflections acting on $\mathbb{H}^n$ if $n\geq 997$ \cite{khovanskii, prokhorov}. There are no uniform lattices generated by reflections acting on $\mathbb{H}^n$ if $n\geq 30$ \cite[Theorem 1]{vinberg-1984}.

The so--called \textit{arithmetic subgroups} of $\mathrm{PO}_{n,1}$ form an important class of hyperbolic lattices. Here and below we shall only deal with reflection groups that are arithmetic of the simplest type (and related objects), that were studied in \cite{vinberg-1967} where a very clear description is given. 

Let $\Gamma < \mathrm{Isom}\, \mathbb{H}^n$ be a reflection group corresponding to a Coxeter polytope $P \subset \mathbb{H}^n$ of finite volume. Let $G = G(P)$ be its Gram matrix. 

The following result due to Vinberg turns out to be very practical.

\begin{theorem}[``Vinberg's criterion'', \cite{vinberg-1967}]\label{theorem:vinberg}
Let $P \subset \mathbb{H}^n$ be a Coxeter polytope of finite volume, and let $G = G(P)$ be its Gram matrix. Let $K = K(G)$ be the field generated over $\mathbb{Q}$ by the entries of $G$, and $k = k(G)$ be the field generated over $\mathbb{Q}$ by all cyclic products of entries of $G$. Let $\Gamma = \Gamma(P)$ be the reflection group associated with $P$. Then $\Gamma$ is an arithmetic lattice if and only if
\begin{itemize}
    \item[(a)] $K$ is a totally real number field,
    \item[(b)] for all embeddings $\sigma: K \rightarrow \mathbb{R}$ such that $\sigma|_k \neq \mathrm{id}$ the matrix $G^\sigma$ is positive semi--definite,
    \item[(c)] all cyclic products of entries of $2 G$ belong to the ring of integers $\mathscr{O}_k$ of $k$.
\end{itemize}
\end{theorem}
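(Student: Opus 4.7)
The plan is to prove both directions by identifying $k=k(G)$ with the field of definition of $\Gamma$ viewed as a subgroup of $\mathrm{O}^+_{n,1}$, and then matching this against the description of arithmetic lattices of simplest type as those commensurable with $\mathrm{O}(f,\mathscr{O}_k)$ for an admissible quadratic form $f$ of signature $(n,1)$ over a totally real number field $k$, by which I mean $f$ is positive definite at every non-identity real place of $k$.

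The key computation is traces of products of reflections. Writing $r_i(x) = x - 2\langle x, e_i\rangle e_i$, a direct expansion shows that the trace of any word $r_{i_1}r_{i_2}\cdots r_{i_s}$ is a $\mathbb{Z}$-polynomial in the entries of $2G$, each monomial of which is a cyclic product $\langle e_{j_1},e_{j_2}\rangle \langle e_{j_2},e_{j_3}\rangle\cdots\langle e_{j_t},e_{j_1}\rangle$. This yields the inclusion of the trace field of $\Gamma$ inside $k(G)$, and the reverse inclusion follows by exhibiting enough short cycles, using that the $e_i$ span $\mathbb{R}^{n,1}$ (since $P$ has finite volume) to realize every generator of $k(G)$ as such a trace.

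For necessity, assume $\Gamma$ is arithmetic of simplest type. Then its field of definition, which I have just identified with $k(G)$, must be totally real; this implies (a), since $K$ is a finite extension of $k$ generated by explicit Gram entries that remain real under every embedding (otherwise the Galois conjugate of $\Gamma$ would fail to be a lattice). Standard compactness at infinity for an admissible form then yields (b), and the integrality condition (c) follows because cyclic products of entries of $2G$ are trace polynomials of elements of $\Gamma$, which must lie in $\mathscr{O}_k$ for an $\mathscr{O}_k$-arithmetic group.

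For sufficiency, choose $n+1$ linearly independent vectors among $\{e_i\}$ as a basis of $\mathbb{R}^{n,1}$ and conjugate by a diagonal matrix whose entries are suitable products of Gram matrix entries, so that each reflection $r_i$ is given in the new basis by a matrix whose entries are cyclic products of entries of $2G$. Hypothesis (c) then forces these entries, and hence the entries of every word in the $r_i$, into $\mathscr{O}_k$, while hypothesis (b) guarantees that the resulting bilinear form $f$ is admissible over $k$; thus $\Gamma$ is commensurable with $\mathrm{O}(f,\mathscr{O}_k)$ and arithmetic of simplest type. The principal obstacle is precisely this rescaling step: the diagonal conjugation must be chosen so that not only the generators but all products of reflections acquire $\mathscr{O}_k$-integral entries, and it is here that the formulation of (c) in terms of cyclic products of $2G$, rather than arbitrary products of Gram entries, plays an essential role.
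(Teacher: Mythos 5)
The paper offers no proof of this statement: it is quoted as ``Vinberg's criterion'' and attributed to \cite{vinberg-1967}, so there is nothing in the paper itself to compare your argument against. Your outline does follow the broad strategy of Vinberg's original proof --- identify $k(G)$ with the field of definition via traces of products of reflections, then match against the description of arithmetic lattices of simplest type --- and the trace computation you lead with (traces of words in the $r_i$ are $\mathbb{Z}$--polynomials in cyclic products of $2G$) is correct and is indeed the backbone of both directions. But the sufficiency direction has a genuine gap, and it sits exactly where you flag the ``principal obstacle.''

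The mechanism you propose there is false as stated. If you rescale to $v_i = c_i e_i$ with $c_i$ a product of entries of $2G$ along a path from a base vertex to $i$, then the matrix of $r_i$ in the new basis has off--diagonal entries $-2g_{ij}c_j/c_i$, and these are \emph{ratios} of cyclic products, not cyclic products: condition (c) places cyclic products in $\mathscr{O}_k$ but says nothing about such ratios, so no diagonal conjugation makes the generators $\mathscr{O}_k$--integral in general. Vinberg's actual argument sidesteps this: take the $\mathbb{Z}$--module $M$ spanned by all path vectors $v_J = (2g_{i_0 j_1})(2g_{j_1 j_2})\cdots(2g_{j_{s-1}j_s})\,e_{j_s}$ over walks $J$ from a fixed base vertex $i_0$ (the graph of nonzero Gram entries is connected since $G$ is indecomposable for a finite--volume polytope). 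One checks $r_i(v_J) = v_J - v_{J\cup\{i\}}$, so $M$ is $\Gamma$--invariant; a walk revisiting a vertex factors off a closed walk whose weight is a cyclic product of $2G$, so $M$ is finitely generated over the ring generated by those cyclic products, which (c) places inside $\mathscr{O}_k$. Thus $\Gamma$ preserves a full $\mathscr{O}_k$--lattice, and (a), (b) together with Borel--Harish-Chandra make its stabilizer an arithmetic lattice containing $\Gamma$ with finite index. Separately, in the necessity direction your treatment of (a) is too quick: total realness of $K$ (not merely of $k$) requires noting that $K$ is generated over $k$ by the entries $g_{ij}$, whose squares are length--two cyclic products lying in $k$ and must be totally positive; the parenthetical ``otherwise the Galois conjugate of $\Gamma$ would fail to be a lattice'' points in the right direction but is not an argument.
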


Thus, if the reflection group $\Gamma$ above satisfies conditions (a), (b) and (c), we call $\Gamma$ an \textit{arithmetic reflection group}. An arithmetic reflection group is always a lattice. 

If $\Gamma$ above is a lattice that satisfies conditions (a) and (b), but not necessarily~(c), then we call $\Gamma$ a \textit{quasi--arithmetic} lattice. If only (a) and (b) are satisfied while (c) fails then $\Gamma$ is called a \textit{properly} quasi--arithmetic lattice \cite{emery, vinberg-1967}.

Two subgroups $\Gamma_1,$ $\Gamma_2 < \mathrm{Isom}\, \mathbb{H}^n$ are called \textit{commensurable} if $\Gamma_1 \cap \Gamma_2$ has finite index in both $\Gamma_1$ and $\Gamma_2$. Same two subgroups are called \textit{commensurable in the wide sense} if there exists an element $g \in \mathrm{Isom}\, \mathbb{H}^n$ such that $\Gamma_1$ and $g \Gamma_2 g^{-1}$ are commensurable. 

If $\Gamma$ is a lattice in $\mathrm{PO}_{n,1}$, $n\geq 2$, then it always has a field of definition $k$ as shown in \cite{vinberg-1971}. We shall assume that $k$ is the smallest field containing the traces of the adjoint representation $\mathrm{Ad}\, \Gamma$. 

For any given lattice $\Gamma$, its field of definition $k$ is invariant under commensurability. Assume that $\Gamma$ is defined over a number field. As shown by Vinberg in \cite{vinberg-1971}, the \textit{ring of definition} $R_\Gamma$ of $\Gamma$ is its commensurability invariant as well. Such a ring of definition is characterised as the smallest ring $R_\Gamma < k$ containing the traces of $\mathrm{Ad}\,\Gamma$. Note that for a quasi--arithmetic reflection group $\Gamma = \Gamma(P)$ as in Theorem~\ref{theorem:vinberg}, $k$ corresponds to the field generated by the cycles of the Gram matrix $G = G(P)$ and is always a number field. Moreover, the ring of definition $R_\Gamma$ corresponds to the ring of integers $\mathscr{O}_k$ adjoint all the cycles of $2G$, cf. \cite[Proposition 4.3.11]{dotti}.

\section{Proof of Theorem~\ref{theorem:main}}\label{sec:proof}

Let $\mathscr{Q} = \mathscr{Q}(\varphi, \ell)$ be the Lambert quadrilateral that has three right angles (encountered in a sequence while traversing its boundary in the anticlockwise direction) and one angle $\varphi$. Let $\ell$ be the length of one of its sides, and $\theta = \cosh \ell$, as marked in Figure~\ref{fig:Lambert}. Then $\mathscr{Q}(\varphi, \ell)$ is determined up to isometry by the parameters $\varphi$~and~$\ell$. 

The Gram matrix of $\mathscr{Q} = \mathscr{Q}(\varphi, \ell)$ turns out to be
\begin{equation*}
    G(\mathscr{Q}) = \begin{pmatrix}
    1 & 0 & -\theta & 0\\
    0 & 1 & 0 & -\rho\\
    -\theta & 0 & 1 & -\cos \varphi\\
    0 & -\rho & -\cos \varphi & 1
    \end{pmatrix},
\end{equation*}
where $\rho$ is the hyperbolic cosine of the other side's length (see Figure \ref{fig:Lambert}). Notice that, since $G(\mathscr{Q})$ has vanishing determinant, we have $\rho = \sqrt{\frac{\theta^2-\sin^2 \varphi}{\theta^2-1}}$. Set $\mathscr{Q}_\theta = \mathscr{Q}(\sfrac{\pi}{3}, \cosh^{-1} \theta)$. Then the reflections in the sides of $\mathscr{Q}_\theta$ generate a cocompact discrete reflection subgroup $\Gamma_\theta < \mathrm{Isom}\, \mathbb{H}^2$. The field of definition of $\Gamma_\theta$ is generated by $\theta^2$: we have $k(\Gamma_\theta) = \mathbb{Q}(\theta^2)$. The field $K(\Gamma_\theta)$ is generated by $\theta$ and $\rho$, and thus $K(\Gamma_\theta) = \mathbb{Q}(\theta, \rho)$. 

\begin{figure}[ht]
    \centering
    \includegraphics[scale=0.25]{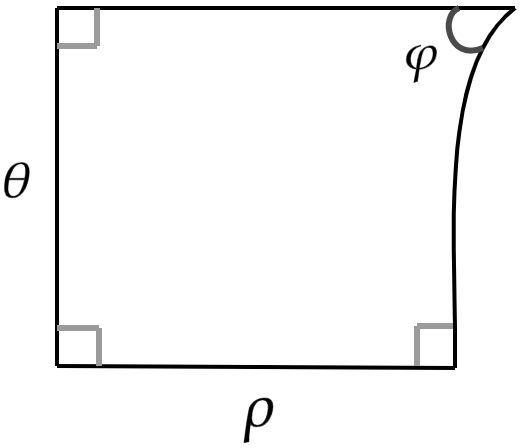}
    \caption{The Lambert quadrilateral $\mathscr{Q}$}
    \label{fig:Lambert}
\end{figure}

Let $\mathscr{P}$ denote the set of totally real Pisot numbers. Then the following theorem describes one of the important roles of $\mathscr{P}$: its elements generate all possible totally real number fields. It is also worth mentioning that Pisot numbers were used before in a related setting of discrete subgroups of higher-rank semisimple Lie groups \cite{bl}. Such subgroups are known to be always arithmetic \cite{margulis}.  

\begin{theorem}[T. Za\"{i}mi, \cite{zaimi}]\label{theorem:zaimi}
Let $k$ be a totally real number field of degree $d$. Then $k$ is generated over $\mathbb{Q}$ by a Pisot number $\tau \in \mathscr{P}$ such that $\tau \geq \alpha^{d-1}_0$, and $\alpha^4_0$ is the maximal root of $x^5 - 5x^4 - x^3 + 5x^2 - 1$.
\end{theorem}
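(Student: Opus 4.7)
\noindent\emph{Proof proposal.} The theorem packages two distinct claims: first, every totally real number field $k$ of degree $d$ admits a totally real Pisot generator $\tau \in \mathscr{P}$; and second, any such $\tau$ satisfies the explicit lower bound $\tau \geq \alpha^{d-1}_0$. The plan is to treat these in turn.

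For the existence claim, I would apply Dirichlet's unit theorem to $\mathscr{O}_k$. Since $k$ is totally real of degree $d$, the logarithmic embedding $\lambda(\eta) = (\log|\sigma_1 \eta|, \ldots, \log|\sigma_d \eta|)$ sends $\mathscr{O}_k^{\times}$ (modulo $\{\pm 1\}$) onto a full-rank lattice in the hyperplane $\{\sum x_i = 0\} \subset \mathbb{R}^d$. A Minkowski/pigeonhole argument inside that hyperplane produces a unit $\eta$ whose $\lambda$-image lies arbitrarily close to the ray in direction $(d-1, -1, \ldots, -1)$, which forces $\sigma_1(\eta) > 1$ and $|\sigma_j(\eta)| < 1$ for $j \geq 2$; thus $\eta$ is a totally real Pisot unit. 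To ensure $\mathbb{Q}(\eta) = k$, I would pass to a perturbation $\eta + N\alpha$ where $\alpha$ is a fixed primitive integral element of $k$ and $N \in \mathbb{Z}$ is chosen so that all $d$ conjugates become pairwise distinct while the Pisot sign pattern is retained; since for sufficiently large $N$ the dominant conjugate is still much greater than $1$ and the subdominant conjugates still much less than $1$, the construction survives.

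For the lower bound, let $\tau$ be a totally real Pisot generator of $k$ with conjugates $\tau_1 = \tau > 1 > |\tau_2| \geq \ldots \geq |\tau_d|$. The minimal polynomial $P_\tau(x) = \prod_{j=1}^d (x - \tau_j) \in \mathbb{Z}[x]$ has integer coefficients, so $|N_{k/\mathbb{Q}}(\tau)| \geq 1$ immediately gives $\tau_1 \geq \prod_{j \geq 2}|\tau_j|^{-1}$; this shows only $\tau_1 > 1$ with no exponential content. To upgrade to $\tau \geq \alpha^{d-1}_0$ I would exploit the power sums $s_m = \tau_1^m + \ldots + \tau_d^m \in \mathbb{Z}$ obtained via Newton's identities: since $|\sum_{j \geq 2} \tau_j^m| < d - 1$, the real number $\tau_1^m$ is forced to lie within distance $d - 1$ of an integer for every $m$. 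Solving the associated constrained optimization problem, in the style of Schur--Siegel--Smyth extremal problems for totally real algebraic integers, pins down the polynomial $x^5 - 5x^4 - x^3 + 5x^2 - 1$ as the extremal minimal polynomial in the base degree $5$; its maximum root $\alpha^4_0$ then propagates to higher degree by an inductive, or resultant-based, rescaling argument, producing the clean bound $\tau \geq \alpha^{d-1}_0$.

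The main obstacle is unquestionably the lower bound. The existence part is a soft geometry-of-numbers construction accessible by several routes. The sharp bound, in contrast, requires a genuine extremal analysis: the very specific polynomial $x^5 - 5x^4 - x^3 + 5x^2 - 1$ emerges only as the solution of a delicate constrained optimization among small totally real Pisot polynomials, and the propagation of its maximal root to a uniform exponential bound $\alpha^{d-1}_0$ valid for all $d$ is the technical heart of Za\"{i}mi's argument. A subsidiary worry is the compatibility between the two halves: the unit produced in the existence step is a priori extremely large, and one must make sure that $\tau$ can indeed be chosen to achieve the lower bound (rather than merely exceed it trivially), but this is intrinsic to the optimization in the second step rather than a new difficulty.
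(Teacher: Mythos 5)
First, a point of order: the paper does not prove this statement at all --- it is quoted from Za\"{i}mi's work \cite{zaimi} and used as a black box, so there is no internal proof to compare yours against; your proposal has to be judged on its own. Doing so, it has one repairable defect and one fatal gap. The repairable defect is in the existence half: the perturbation $\eta \mapsto \eta + N\alpha$ does \emph{not} preserve the Pisot property, since for large $N$ every conjugate $\sigma_j(\eta) + N\sigma_j(\alpha)$ tends to infinity in absolute value (no $\sigma_j(\alpha)$ vanishes), so the subdominant conjugates leave the interval $(-1,1)$ rather than staying inside it. Fortunately the perturbation is unnecessary: if $\theta \in \mathscr{O}_k$ and exactly one of $\sigma_1(\theta), \dots, \sigma_d(\theta)$ has absolute value greater than $1$, then $\theta$ automatically generates $k$, because each value in that list occurs with multiplicity $[k : \mathbb{Q}(\theta)]$ and the dominant one occurs exactly once. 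So your Dirichlet-unit construction already yields a primitive totally real Pisot unit, and the first half goes through once the broken repair is deleted.

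The fatal gap is the second half, which carries the entire quantitative content of the theorem. The power-sum observation that $\tau_1^m$ lies within $d-1$ of an integer for every $m$ is essentially the defining property of a Pisot number and yields no lower bound on $\tau_1$; from there you assert that ``solving the associated constrained optimization'' pins down the polynomial $x^5 - 5x^4 - x^3 + 5x^2 - 1$ and that its maximal root ``propagates to higher degree by an inductive, or resultant-based, rescaling argument.'' Neither claim is substantiated: you specify no functional to optimize, give no reason the extremal configuration should occur in degree $5$, and provide no mechanism by which a degree-$5$ extremal datum produces the exponent $d-1$ uniformly in $d$. Note also that, as the theorem is actually used in Lemma~\ref{lemma:1} (every element of $\mathscr{T}$ is bounded below by $\sfrac{71}{82}\cdot\alpha_0$), the inequality must be a universal lower bound on \emph{all} totally real Pisot numbers of degree $d$, not merely a property of one well-chosen generator --- the ``there exists a large generator'' reading is vacuous, since powers $\tau^m$ of any Pisot generator are again totally real Pisot generators of the same field and are unbounded. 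So the heart of Za\"{i}mi's theorem remains unproved in your proposal; what you have is an accurate diagnosis of where the difficulty lies, not an argument that overcomes it.
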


Let us set $\mathscr{T} = \left\{ \frac{71}{82}\, \tau \,|\, \tau \in \mathscr{P}  \right\}$. Here we use $\sfrac{71}{82}$ as a rational approximation (from below) for $\sfrac{\sqrt{3}}{2}$ that is sufficiently good for our purposes: 
$| \frac{71}{82} - \frac{\sqrt{3}}{2} | < 10^{-3}$. 

\begin{lemma}\label{lemma:1}
If $\theta \in \mathscr{T}$, then the field $K(\Gamma_\theta)$ is totally real.
\end{lemma}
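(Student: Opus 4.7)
The plan is to translate the totally-real condition into a sign condition on the Galois conjugates of $\theta$, and then discharge it via the Pisot property of $\tau$ combined with the arithmetic inequality $71/82 < \sqrt{3}/2$. With $\varphi = \pi/3$ so that $\sin^2\varphi = 3/4$, the identity $\rho^2 = (\theta^2 - 3/4)/(\theta^2 - 1)$ recalled just before the lemma shows that $K(\Gamma_\theta) = \mathbb{Q}(\theta,\rho)$ is totally real if and only if, for every real embedding $\sigma$ of $\mathbb{Q}(\theta)$,
\[
\sigma(\rho^2) \;=\; \frac{(\theta^\sigma)^2 - 3/4}{(\theta^\sigma)^2 - 1} \;\geq\; 0,
\]
equivalently $(\theta^\sigma)^2 \notin (3/4,\,1)$. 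Since $\tau$ is totally real, so are $\theta = (71/82)\tau$ and the ambient field $\mathbb{Q}(\theta) = \mathbb{Q}(\tau)$, and the Galois conjugates of $\theta$ are precisely the numbers $(71/82)\tau^\sigma$.

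The verification then splits into two cases. For $\sigma = \mathrm{id}$ the required inequality reduces to $\theta > 1$, which is built into the setup ($\theta = \cosh \ell$); it also follows from Theorem~\ref{theorem:zaimi} together with the elementary estimate $\alpha_0 > 82/71$ (for $d \geq 2$) and from the fact that rational Pisot numbers are integers $\geq 2$ (for $d = 1$). For any non-identity embedding $\sigma$ of $\mathbb{Q}(\tau)$ the defining property of a Pisot number yields $|\tau^\sigma| < 1$, whence
\[
|\theta^\sigma| \;=\; \tfrac{71}{82}\,|\tau^\sigma| \;<\; \tfrac{71}{82} \;<\; \tfrac{\sqrt{3}}{2},
\]
so that $(\theta^\sigma)^2 < 3/4$, and both the numerator and the denominator of $\sigma(\rho^2)$ are negative. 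In every case $\sigma(\rho^2) \geq 0$, which is what is needed.

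The crux of the argument is the strict inequality $71/82 < \sqrt{3}/2$, which is precisely why this particular rational approximation was chosen in the definition of $\mathscr{T}$: it must sit strictly below $\sqrt{3}/2$ so that the strict Pisot bound $|\tau^\sigma| < 1$ propagates through multiplication by $71/82$ and places $(\theta^\sigma)^2$ strictly below the forbidden interval $(3/4, 1)$. Once this numerical inequality is on the table, no further obstacle arises and the lemma follows.
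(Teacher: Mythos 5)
Your argument is correct and follows essentially the same route as the paper's proof: reduce total reality of $K(\Gamma_\theta)=\mathbb{Q}(\theta,\rho)$ to the sign of $\sigma(\rho^2)=\frac{\sigma(\theta)^2-3/4}{\sigma(\theta)^2-1}$, handle the identity embedding via the lower bound on $\theta$ coming from Theorem~\ref{theorem:zaimi}, and handle the non-identity embeddings via the Pisot bound $|\tau^\sigma|<1$ combined with $\frac{71}{82}<\frac{\sqrt{3}}{2}$. Your write-up is in fact slightly more careful than the paper's (explicit absolute values on $\theta^\sigma$ and the separate treatment of $d=1$), but it is the same proof.
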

\begin{proof}
Since $\theta$ is already a totally real number, it remains to show that $\rho$ is totally real, too. We have that 
\begin{equation*}
    \rho^2 = \frac{\theta^2 - \sfrac{3}{4}}{\theta^2 - 1}. 
\end{equation*}
For all $\theta \in \mathscr{T}$ we have $\theta \geq \sfrac{71}{82} \cdot \alpha_0 > 1.29 \ldots$ by Theorem \ref{theorem:zaimi}. This yields $\rho \in \mathbb{R}$. Moreover, for any non-trivial Galois automorphism $\sigma \in \mathrm{Gal}(K/\mathbb{Q})$ we have that $\sigma(\theta) < \sfrac{71}{82} < \sfrac{\sqrt{3}}{2}$. Thus $\sigma(\rho) \in \mathbb{R}$ for all $\sigma \in \mathrm{Gal}(K/\mathbb{Q})$.  
\end{proof}

\begin{lemma}\label{lemma:3}
If $\theta \in \mathscr{T}$, then the group $\Gamma_\theta$ is quasi--arithmetic.
\end{lemma}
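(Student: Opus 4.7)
The plan is to invoke Vinberg's criterion (Theorem~\ref{theorem:vinberg}). Since quasi--arithmeticity requires only conditions (a) and (b), and (a) is furnished by Lemma~\ref{lemma:1}, the task reduces to verifying (b): for every embedding $\sigma\colon K(\Gamma_\theta)\to\mathbb{R}$ whose restriction to $k(\Gamma_\theta)=\mathbb{Q}(\theta^2)$ is non-trivial, the conjugated Gram matrix $G(\mathscr{Q}_\theta)^\sigma$ is positive semi-definite. Cocompactness of $\Gamma_\theta$, hence its lattice property, is already built into the construction from Figure~\ref{fig:Lambert}.

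The first step is to control the non-trivial Galois conjugates of $\theta=\tfrac{71}{82}\tau$. By the Pisot hypothesis on $\tau$, every conjugate $\tau'\neq\tau$ satisfies $|\tau'|<1$, and $-\tau$ cannot be such a conjugate since $|-\tau|=\tau>1$. Consequently, an embedding $\sigma$ non-trivial on $k$ must send $\theta$ to some $\theta':=\sigma(\theta)$ with
\[
|\theta'|<\tfrac{71}{82}<\tfrac{\sqrt{3}}{2},
\]
so in particular $(\theta')^2<\tfrac{3}{4}$. This is precisely where the choice of the rational approximation $\tfrac{71}{82}$ in the definition of $\mathscr{T}$ pulls its weight.

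Using the identity $\rho^{2}=\tfrac{\theta^{2}-3/4}{\theta^{2}-1}$ (which holds in $k$), I would then deduce
\[
\sigma(\rho)^{2}=\frac{3/4-(\theta')^{2}}{1-(\theta')^{2}},
\]
which is strictly positive, and an elementary monotonicity argument in $(\theta')^{2}\in[0,3/4)$ yields $\sigma(\rho)^{2}\leq \tfrac{3}{4}<1$.

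The final step verifies positive semi--definiteness of $G(\mathscr{Q}_\theta)^\sigma$ by showing that every principal minor is non-negative. The $1\times 1$ and $2\times 2$ minors are immediate from the bounds above; the four $3\times 3$ principal minors (obtained by deleting each row/column in turn) evaluate to $\tfrac{3}{4}-\sigma(\rho)^{2}$, $\tfrac{3}{4}-(\theta')^{2}$, $1-\sigma(\rho)^{2}$, $1-(\theta')^{2}$, all non-negative; and the $4\times 4$ determinant vanishes because $\det G(\mathscr{Q}_\theta)=0$ forces $\det G(\mathscr{Q}_\theta)^\sigma=\sigma(0)=0$. The main obstacle is not the routine principal minor computation but rather the clean justification, via the Pisot property combined with the rational estimate $\tfrac{71}{82}<\tfrac{\sqrt{3}}{2}$, that every non-trivial conjugate of $\theta$ lands strictly below $\tfrac{\sqrt{3}}{2}$; once this is in place, (b) follows uniformly for all $\theta\in\mathscr{T}$.
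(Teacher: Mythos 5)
Your proposal is correct and follows the same route as the paper: invoke Vinberg's criterion, use Lemma~\ref{lemma:1} for condition (a), and reduce condition (b) to the bound $\sigma(\theta)^2 < \sfrac{3}{4}$ coming from the Pisot property of $\tau$ and the estimate $\sfrac{71}{82} < \sfrac{\sqrt{3}}{2}$ (the paper writes this as $\sigma(\theta)^2 < \sfrac{5041}{6724} = (\sfrac{71}{82})^2$). You simply spell out the principal--minor computation and the bound $\sigma(\rho)^2 \leq \sfrac{3}{4}$ that the paper leaves as ``easily reduced''.
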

\begin{proof}
By Lemma~\ref{lemma:1}, the field $K = K(\Gamma_\theta)$ is totally real. So is the field of definition $k = k(\Gamma_\theta) < K$. Let $\sigma$ be a non--trivial Galois automorphism of $K$ such that $\sigma|_k \neq \mathrm{id}$. We need to check that all principal minors of $G^\sigma$ are non--negative. The latter condition is easily reduced to the inequality $\sigma(\theta)^2 < \sfrac{3}{4}$. By construction, for every $\theta \in \mathscr{T}$ we have $\sigma(\theta)^2 < \sfrac{5041}{6724} < \sfrac{3}{4}$. Hence, by Vinberg's criterion (Theorem~\ref{theorem:vinberg}), $\Gamma_\theta$ is quasi--arithmetic.
\end{proof}

\begin{lemma}\label{lemma:2}
The group $\Gamma_\theta$ is maximal whenever it is properly quasi--arithmetic. 
\end{lemma}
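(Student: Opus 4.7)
The plan is to argue contrapositively: if $\Gamma_\theta$ fails to be maximal then it must in fact be arithmetic, which excludes the ``properly quasi--arithmetic'' case and proves the lemma. Suppose therefore that $\Gamma_\theta$ is strictly contained in some reflection group $\Gamma' < \mathrm{Isom}\,\mathbb{H}^2$. Since both the field of definition $k$ and the ring of definition $R = R_{\Gamma_\theta}$ are commensurability invariants (cf. \cite{vinberg-1971} and \cite[Proposition 4.3.11]{dotti}), the ambient group $\Gamma'$ is also quasi--arithmetic over $k$ with the same ring $R$. In particular, $\Gamma'$ is arithmetic if and only if $\Gamma_\theta$ is, so it suffices to derive arithmeticity of $\Gamma_\theta$.

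The next step is geometric. A fundamental Coxeter polygon $P'$ of $\Gamma'$ tiles $\mathscr{Q}_\theta$ into $[\Gamma':\Gamma_\theta] > 1$ congruent pieces, so there is at least one additional reflection hyperplane $\ell$ of $\Gamma'$ whose trace lies inside $\mathscr{Q}_\theta$ and meets the sides of $\mathscr{Q}_\theta$, its vertices, or other additional hyperplanes, at dihedral angles of the form $\pi/k$, $k \geq 2$. I would enumerate the finitely many combinatorial positions of such an $\ell$: either $\ell$ passes through one of the four vertices, splitting the local angle $\pi/2$ or $\pi/3$ into a pair $(\pi/k_1,\pi/k_2)$ with $1/k_1 + 1/k_2 \in \{ 1/2, 1/3 \}$ (leaving only $(3,6),(4,4)$ at a right--angled vertex and $(4,12),(6,6)$ at the vertex of angle $\pi/3$); or $\ell$ crosses two distinct edges transversely at Coxeter angles $\pi/k_1$ and $\pi/k_2$ and chops $\mathscr{Q}_\theta$ into two smaller Coxeter sub--polygons.

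The algebraic step is to show that, in each feasible configuration, the extra Lorentzian products coming from $\ell$ together with the integrality of the cycles of $2G(P')$ force $4\theta^2 \in \mathscr{O}_k$. Since $4\rho^2 = (4\theta^2 - 3)/(\theta^2 - 1) \in k$, all cycles of $2G(\mathscr{Q}_\theta)$ would then lie in $\mathscr{O}_k$ and Vinberg's criterion (Theorem~\ref{theorem:vinberg}) would declare $\Gamma_\theta$ arithmetic, contradicting the proper quasi--arithmeticity hypothesis.

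The main obstacle is the geometric case analysis, since each of the finitely many positions of $\ell$ produces its own trigonometric identity linking the splitting angles, the distances between $\ell$ and the sides it crosses, and the length parameter $\theta$; each such identity must then be traced back to integrality of $4\theta^2$. A cleaner route would be either to invoke a ready--made classification of Coxeter subdivisions of Lambert quadrilaterals in $\mathbb{H}^2$ (showing \emph{a priori} that every such subdivision produces an arithmetic reflection group), or to use a commensurator--rigidity statement tailored to properly quasi--arithmetic hyperbolic lattices in rank one.
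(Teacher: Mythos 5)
Your overall strategy -- contrapositive, plus the observation that arithmeticity is a commensurability invariant so it suffices to show that a non-maximal $\Gamma_\theta$ would be arithmetic -- matches the spirit of the paper's proof. However, there are two concrete gaps. First, you only consider overgroups $\Gamma'$ that are themselves reflection groups. Maximality here means maximality among all lattices, and the paper must (and does) also rule out non-reflective overgroups: it invokes \cite[Proposition 3]{vinberg-1972}, which says that any lattice containing $\Gamma_\theta$ with finite index is $\Gamma_\theta \rtimes \mathfrak{S}$ with $\mathfrak{S}$ the symmetry group of the fundamental polygon, and then checks that $\mathfrak{S}$ is trivial unless $\theta = \rho = \sqrt{\sfrac{3}{2}}$ (in which case $\Gamma_\theta$ is arithmetic anyway). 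Your plan says nothing about this case.

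Second, the core of your plan -- enumerating the positions of an extra reflection line $\ell$ inside $\mathscr{Q}_\theta$ and extracting integrality of $4\theta^2$ in each configuration -- is both unexecuted and built on an unjustified premise. You invoke ``the integrality of the cycles of $2G(P')$'', but $\Gamma'$ is only known to be quasi--arithmetic (condition (c) of Theorem~\ref{theorem:vinberg} is exactly what may fail), so you cannot assume those cycles are integral for a general $P'$; doing so is circular. The paper avoids the entire case analysis with two short observations: (i) the fundamental polygon of a reflection overgroup is a triangle or a quadrilateral (Riemann--Hurwitz / Felikson--Tumarkin), and every hyperbolic Coxeter quadrilateral has area at least $\pi/6 = \mathrm{area}(\mathbb{H}^2/\Gamma_\theta)$, forcing index $1$ in the quadrilateral case; (ii) in the triangle case the cycles of $2G$ are products of the algebraic integers $2\cos(\sfrac{\pi}{m})$, so a quasi--arithmetic triangle group is automatically arithmetic, contradicting proper quasi--arithmeticity. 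Note that (ii) is precisely the special situation in which your integrality premise is actually valid -- but then no analysis of where $\ell$ sits is needed at all, while in the quadrilateral case the area bound makes the question moot. If you repair your argument, replace the configuration-by-configuration computation with these two global observations, and add the non-reflective overgroup step.
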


\begin{proof}
Assume that $\Gamma_\theta$ is \textit{properly} quasi--arithmetic. First, let us show that $\Gamma_\theta$ is a \textit{maximal reflection group} in the following sense: for any subgroup $\Gamma < \mathrm{Isom}\, \mathbb{H}^2$ generated by reflections we have that $\Gamma_\theta < \Gamma$ with finite index if and only if $\Gamma = \Gamma_\theta$. 

As follows from the Riemann--Hurwitz formula (see also \cite[Theorem 1]{felikson-tumarkin1} and \cite[Theorem 1.2]{felikson-tumarkin2}), $\Gamma$ is generated by either $3$ or $4$ reflections. Since $\Gamma$ acts on $\mathbb{H}^2$ cocompactly, we have that $\Gamma$ is either a triangle reflection group, or $\Gamma$ is generated by reflections in the sides of a quadrilateral. 

\medskip

\paragraph{\it Case (i): $\Gamma$ is a triangle group.} In this case, if $\Gamma$ is quasi--arithmetic then it is automatically arithmetic. This follows easily from Vinberg's criterion (Theorem~\ref{theorem:vinberg}) if we recall that $2 \cos(\sfrac{\pi}{m})$ is an algebraic integer for all $m \geq 1$. However, as we suppose that $\Gamma_\theta$ is \textit{properly quasi--arithmetic}, it cannot be commensurable with $\Gamma$ in this case since arithmeticity is a commensurability invariant. 

\medskip

\begin{figure}[ht]
    \centering
    \includegraphics[scale=0.125]{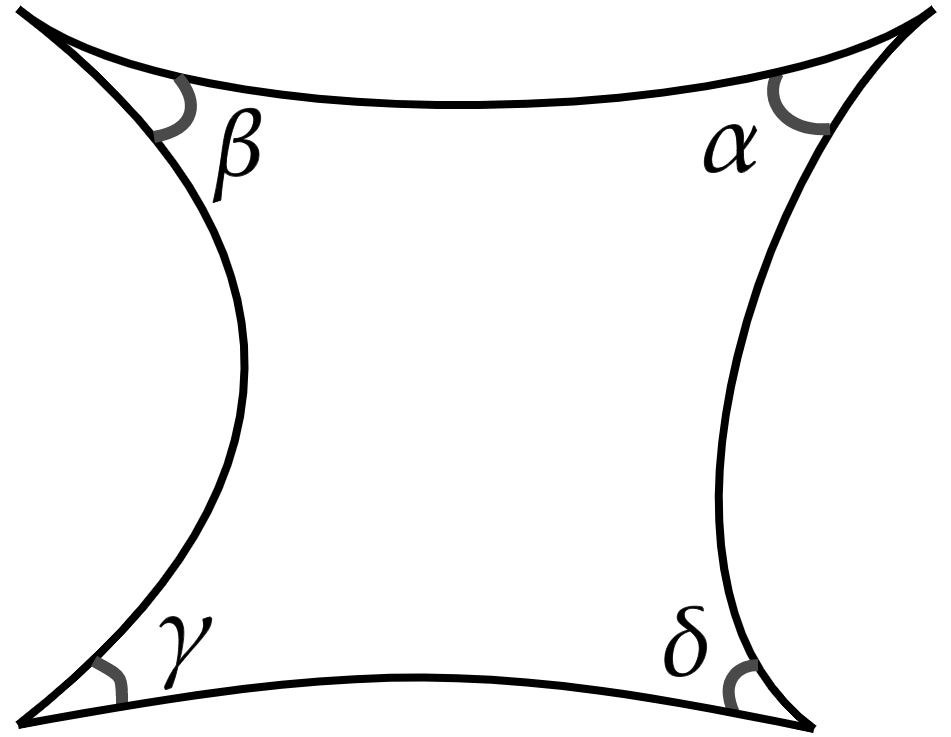}
    \caption{A hyperbolic quadrilateral}
    \label{fig:quad}
\end{figure}

\paragraph{\it Case (ii): $\Gamma$ is a quadrilateral group.} Let $P$ be the fundamental polygon for $\Gamma$ that is a quadrilateral with angles $\alpha$, $\beta$, $\gamma$, $\delta$ pictured in Figure~\ref{fig:quad}. Such a quadrilateral exists whenever $\alpha + \beta + \gamma + \delta < 2\pi$. Moreover, since $P$ is a Coxeter polygon, we have that $\alpha, \beta, \gamma, \delta \leq \sfrac{\pi}{2}$ and all of them are integer submultiples of $\pi$. Then
\begin{equation*}
    \mathrm{area}\, (\mathbb{H}^2/\Gamma) = 2\pi - \alpha - \beta - \gamma - \delta \geq \frac{\pi}{6} = \mathrm{area}\, (\mathbb{H}^2/\Gamma_\theta). 
\end{equation*}
Thus we have $|\Gamma:\Gamma_\theta| = {\mathrm{area}\,(\mathbb{H}^2/\Gamma_\theta)}/{\mathrm{area}\,(\mathbb{H}^2/\Gamma)} = 1$, which implies that $\Gamma = \Gamma_\theta$.

\medskip

Thus, we obtain that $\Gamma_\theta$ is a maximal reflection group with fundamental polygon $P = \mathscr{Q}(\sfrac{\pi}{3}, \cosh^{-1} \theta)$. By \cite[Proposition 3]{vinberg-1972} any lattice $\Gamma$ containing $\Gamma_\theta$ with finite index satisfies $\Gamma = \Gamma_\theta \rtimes \mathfrak{S}$, where $\mathfrak{S}$ is the symmetry group of $P$. Observe that $\mathfrak{S}$ is trivial unless $\theta = \rho = \sqrt{\sfrac{3}{2}}$. In the latter case, however, $\Gamma_\theta$ is easily seen to be arithmetic by Vinberg's criterion (Theorem~\ref{theorem:vinberg}). 
\end{proof}

The group $\Gamma_\theta$ is quasi--arithmetic whenever $\theta \in \mathscr{T}$, as follows from Lemma~\ref{lemma:3}. Even more, in this way we obtain infinitely many \textit{properly} quasi--arithmetic groups. Indeed, the degree $d = |\mathbb{Q}(\theta):\mathbb{Q}|$ can be arbitrarily high, and the degree of $k = \mathbb{Q}(\theta^2)$ is at least $\sfrac{d}{2}$. Since the degree of the field of definition of an arithmetic group is bounded (see \cite{absw, bel-fields, bel-lin-fields, linowitz, mac, nikulin}) we obtain that $\Gamma_\theta$ cannot be arithmetic for $d > 50$. By combining this fact with Lemma~\ref{lemma:2} we obtain Theorem~\ref{theorem:main}. 

\section{Remarks: groups and fields}\label{sec:comments}

There are a few things that are easy to show, and nevertheless we would like to stress them in order to provide a complete picture of how flexible maximal quasi--arithmetic reflection subgroups of $\mathrm{Isom}\, \mathbb{H}^2$ can be. 

\medskip

\paragraph{\it Fixing the degree of the field $k$.} There are infinitely many totally real fields $k$ of degree $d$, for each $d\geq 2$. Thus, we obtain infinitely many pairwise incommensurable groups $\Gamma_\theta$ with $k = \mathbb{Q}(\theta^2)$. If $k = \mathbb{Q}$, then we set $\theta = \sqrt{1+p}$, for a rational prime~$p$. The groups $\Gamma_\theta$ will always be properly quasi--arithmetic, since 
\begin{equation*}
    4 \rho^2 = 4 + \frac{1}{\theta^2-1} = 4 + \frac{1}{p} \notin \mathbb{Z}.
\end{equation*}

Moreover, the ring of definition is given by $R(\Gamma_\theta) = \mathbb{Z}[\sfrac{1}{p}]$, and hence the groups $\Gamma_\theta$ are pairwise incommensurable for different rational primes $p$.

\medskip

\paragraph{\it Fixing the field $k$ itself.} If we bound both the degree and discriminant of $k$, then we have only a finite number of fields $k$ satisfying the bounds. Nevertheless, we can still obtain infinitely many pairwise incommensurable groups $\Gamma_\theta$. In this case, one can pick $\mathscr{Q}_\theta$ that produces a properly quasi--arithmetic reflection group defined over $k$, and then replace $\theta$ by $\tau = (1-\sfrac{1}{p}) \theta$, with $p$ being a sufficiently large rational prime so that $\Gamma_\tau$ satisfies Vinberg's criterion (Theorem~\ref{theorem:vinberg}). Provided that the degree of $\theta$ over $\mathbb{Q}$ is greater than $2$, we have that $R(\Gamma_{\tau})$ and $R(\Gamma_{\eta})$ are different whenever $\tau = (1-\sfrac{1}{p}) \theta$ and $\eta = (1-\sfrac{1}{q}) \theta$ with sufficiently large $p > q > 2$. In this case, the denominators of the elements in $R(\Gamma_{\tau})$ and $R(\Gamma_{\eta})$ have a different set of rational primes involved. If $\theta$ has degree $2$ over $\mathbb{Q}$, then the field of definition is $\mathbb{Q}$ and the construction in the previous paragraph can be applied. 

\section{Remarks: quadratic forms}

Let us observe that condition (b) of Vinberg's criterion (Theorem \ref{theorem:vinberg}) implies that a non--uniform quasi--arithmetic lattice generated by hyperbolic reflections always has $\mathbb{Q}$ as its field of definition. However, in contrast to the arithmetic case, some \textit{uniform} quasi--arithmetic lattices generated by reflections may preserve \textit{isotropic} quadratic forms. First such examples were given by Vinberg in \cite{Vinberg-Bielefeld}. 

Let us consider the Lambert quadrilateral $\mathscr{Q}_\theta$ with $\theta = \sqrt{1+3p^2}$, where $p > 3$ is a rational prime. The associated reflection group $\Gamma_\theta < \mathrm{Isom}\, \mathbb{H}^2$ is a uniform lattice. By applying Jacobi's theorem to the Gram matrix of $\mathscr{Q}_\theta$ one can easily find that $\Gamma_\theta$ preserves a quadratic form equivalent to 
\begin{equation*}
    f_\theta(x, y, z) = 36 p^2 x^2 + 27 p^2 y^2 - 4 (1 + 3 p^2) z^2.
\end{equation*}
This form is isotropic, as can be easily verified by setting $x = 1$, $y = 2p$, $z = 3p$. Moreover, we have infinitely many incommensurable such lattices $\Gamma_\theta$ for different primes $p > 3$. By the preceding discussion, all of them are properly quasi--arithmetic and thus maximal.

\section{Higher dimensions: Theorem \ref{theorem:bounds}}\label{sec:dimensions}

As pointed out in the introduction, one can use Vinberg's local determinants technique \cite[\S~5]{vinberg-1984} coupled with the dimension bounds in \cite[Proposition~2]{vinberg-1984} to show that the set of possible fields of definition of quasi--arithmetic reflection groups acting on $\mathbb{H}^n$ is finite for $n\geq 14$. This fact is stated by Vinberg as Theorem 2 and Theorem 3 in \cite{vinberg-1984} for arithmetic groups only. However, one can see that quasi--arithmeticity suffices for Vinberg's arguments to work. Since the details are rather technical, we recall here some basic steps.

The core of the proof consists of combinatorial techniques applied to Coxeter schemes of reflection groups, and their subschemes. This combinatorial aspect is independent of  arithmeticity. Moreover, let us recall that a non--uniform quasi--arithmetic lattice always has $\mathbb{Q}$ as its field of definition. Therefore, one has to focus only on cocompact reflection groups.

The peculiarity of cocompact reflection groups is that their Coxeter schemes always contain a Lann\'er subscheme (corresponding to a cocompact simplex reflection group). In \cite[Proposition 17]{vinberg-1984}, Vinberg shows that the determinant of any Lann\'er subscheme of the Coxeter scheme associated to an arithmetic cocompact reflection group generates its field of definition. The only conditions of Vinberg's arithmeticity criterion (Theorem~\ref{theorem:vinberg}) used in this proof are conditions (a) and (b). Hence, the above statement also applies to fields of definition of quasi--arithmetic reflection groups.

The aforementioned combinatorial techniques are then applied to Lann\'er subschemes whose determinant is a negative algebraic number with all other Galois conjugates being positive. These are Lann\'er subschemes of quasi--arithmetic reflection groups. Thus, Theorem~\ref{theorem:bounds} follows immediately from what has already been established in \cite{vinberg-1984}.

Unfortunately, we cannot obtain a similar result about the possible fields of definition for quasi--arithmetic reflection groups acting on $\mathbb{H}^n$, where $3 \leq n \leq 13$. A na\"ive conjecture would be that this list is finite for $n\geq 4$, which can be partly supported by the fact that lattices acting on $\mathbb{H}^n$ by isometries demonstrate very strong discreteness properties with respect to geometric convergence if $n\geq 4$ (cf. Wang's theorem \cite{wang}). For $n = 3$ we lack any kind of reasonable corroboration to an analogous statement or to its opposite. To the best of our knowledge, all known families of non--arithmetic reflection groups acting on $\mathbb{H}^3$ have only finitely many quasi--arithmetic members (e.g. reflection groups generated by triangular prisms cf. \cite{vinberg-1967}). Moreover, the approaches used by Agol, Belolipetsky, Storm and Whyte, and also by Nikulin, heavily rely on arithmeticity properties, and therefore do not provide information on maximality of quasi--arithmetic groups.

Finally, Vinberg showed the absence of arithmetic reflection groups acting on $\mathbb{H}^n$ for $n \geq 30$ \cite[Theorem 4]{vinberg-1984}. It is thus natural to ask whether this result can be transferred to quasi--arithmetic reflection groups. 

In order to obtain the above dimension bound, Vinberg first shows that there are no reflective Lorentzian lattices of rank $31$ or greater. This result is then combined with the following fact: every non--uniform \textit{arithmetic} lattice generated by reflections acting on the hyperbolic space of dimension $n\geq 2$ is a subgroup of finite index in $O(L)$, the group of orthogonal transformations of a rank $(n+1)$ Lorentzian reflective lattice $L$. However, the latter does not hold in general for quasi--arithmetic groups, which precludes a straightforward generalisation.

\bibliographystyle{abbrv}
\bibliography{biblio.bib}
\end{document}